\theoremstyle{definition}
\newtheorem{thm}[equation]{Theorem}
\newtheorem{prop}[equation]{Proposition}
\numberwithin{equation}{section}
\begin{document}
\author{Kamran Alam Khan}
\address{Department of Mathematics \\
 V. R. A. L. Govt. Girls P. G. College \\
 Bareilly (U.P.)-INDIA}
\email{kamran12341@yahoo.com}
\title{Generalized $n$-metric spaces and fixed point theorems}
\pagestyle{myheadings}
\thispagestyle{empty}
\begin{abstract}
\noindent
G\"{a}hler (\cite{GA1} ,\cite{GA2}) introduced the concept of  2-metric as a possible  generalization  of usual notion of a metric space. In many cases the results obtained in the usual metric spaces and 2-metric spaces are found to be unrelated (see \cite{HA1}). Mustafa and Sims \cite{MU2} took a different approach and introduced the notion of $G$-metric. The author \cite{KHA} generalized the notion of G-metric to more than three variables and introduced the concept of $K$-metric as a function $K\colon X^n \to \mathbb{R}^+$, $(n\ge 3)$. In this paper, We improve the definition of $K$-metric by making symmetry condition more general. This improved metric denoted by $G_n$ is called the \emph{Generalized $n$-metric}. We develop the theory for generalized $n$-metric spaces and obtain some fixed point theorems.
\end{abstract}
\subjclass[2010]{Primary 54E35; Secondary 47H10}
\keywords{2-metric space, G-metric space, K-metric space, fixed point}
\maketitle
\section{INTRODUCTION}
G\"{a}hler (\cite{GA1} ,\cite{GA2}) introduced the concept of  2-metric as a possible  generalization  of usual notion of a metric space. K. S. Ha et al \cite{HA1} have pointed out that the construction by G\"{a}hler is an independent approach and in many cases there is no connection between the results obtained in the usual metric spaces and 2-metric spaces. It was mentioned by G\"{a}hler \cite{GA1} that the notion of a 2-metric is an extension of an idea of ordinary metric and geometrically $d(x,y,z)$ represents the area of a triangle formed by the points $x$,$y$ and $z$ in $X$ as its vertices. But this is not always true. Sharma \cite{SHA} showed that $d(x,y,z)=0$ for any three distinct points $x$, $y$, $z$ $\in \mathbb{R}^2$.\\  
\newline B. C. Dhage \cite{DHA} introduced the concept of $D$-metric in order to translate results from usual metric space to $D$-metric space. Mustafa and Sims \cite{MU1} showed that most of the results concerning $D$-metrics are incorrect. This led them to introduce a new class of generalized metrics called $G$-metric in which the tetrahedral inequality is replaced by an inequality involving repetition of indices (see \cite{MU2}).Many authors(such as \cite{ASK}) obtained fixed point results for $G$-metric spaces. Recently the author \cite{KHA} generalized the notion of $G$-metric space to more than three variables and introduced the concept of $K$-metric. In the present work we improve the definition of $K$-metric by making symmetry condition more general. This improved metric denoted by $G_n$ is called the \emph{Generalized $n$-metric}. We develop the theory for generalized $n$-metric spaces and obtain some fixed point theorems.\\

\subsection{Definition} (\cite{KHA}) Let $X$ be a non-empty set, and $\mathbb{R}^+$ denote the set of non-negative real numbers. Let $K\colon X^n \to \mathbb{R}^+$, $(n\ge 3)$ be a function satisfying the following properties:
\begin{itemize}
\item [[K 1]] $K(x_1,x_2,...,x_n)=0$ if $x_1=x_2=\dots =x_n$,
\item [[K 2]] $K(x_1,x_1,...,x_1,x_2)>0$ for all $x_1$, $x_2 \in X$ with $x_1\neq x_2$,
\item [[K 3]] $K(x_1,x_1,...,x_1,x_2)\leq K(x_1,x_2,...,x_n)$ for all $x_1$, $x_2,... ,x_n \in X$ with the condition that any two of the points $x_2,\cdots ,x_n$ are distinct,
\item [[K 4]] $K(x_1,x_2,...,x_n)=K(x_{\pi^r(1)},x_{\pi^r (2)},...,x_{\pi^r(n)})$, for all $x_1$, $x_2,...,x_n \in X$ and a permutation $\pi$ of $\{1,2,...n\}$ such that $\pi(s)=s+1$ for all\\ $1\le s<n$, $\pi(n)=1$ and for all $r\in \mathbb{N}$, 
\item [[K 5]] $K(x_1,x_2,...,x_n)\le K(x_1,x_{n+1},...,x_{n+1})+K(x_{n+1},x_2,...,x_n)$ for all\\ $x_1$,$x_2,...,x_n,x_{n+1}\in X$.
 
\end{itemize}
Then the function $K$ is called a \emph{$K$-metric} on $X$, and the pair $(X,K)$ a $ \emph{$K$-metric space}$. 

\subsubsection{Example} Let $\mathbb{R}$ denote the set of all real numbers. Define a function\\ $\rho\colon \mathbb{R}^n \to \mathbb{R}^+$,$(n\ge3)$ by
\begin{equation*}
\rho(x_1,x_2,\dots ,x_n)= \text{max} \{ \left|x_1-x_2\right|,\dots ,\left|x_{n-1}-x_n\right|,\left|x_n-x_1\right|\}
\end{equation*}
for all $x_1$, $x_2,...,x_n \in \mathbb{R}$. Then $( \mathbb{R}, \rho)$ is a $K$-metric space.

\subsubsection{Example} For any metric space $(X,d)$, the following functions define $K$-metrics on $X$:
\begin{itemize}
\item [(1)] $K_1^d(x_1,x_2,...,x_n)=\frac{1}{n}\big[\sum_{r=1}^{n-1}d(x_r,x_{r+1})+d(x_n,x_1)\big]$,

\item [(2)] $K_2^d(x_1,x_2,...,x_n)=\text{max}\{d(x_1,x_2),d(x_2,x_3),\dots ,d(x_{n-1},x_n),d(x_n,x_1)\}$.
\end{itemize}
Geometrically the $K$-metric represents the notion of the perimeter of an oriented polygon with vertices $x_1$,$x_2$,...,$x_n$. Here we observe that the condition of symmetry is not satisfied in general as in $G$-metric.  Thus the notion of a $K$-metric is not a straight forward translation of the concept of $G$-metric. Now we introduce an $n$ point analogue of $G$-metric as follows.

\section{main results}

\subsection{Definition}
Let $X$ be a non-empty set, and $\mathbb{R}^+$ denote the set of non-negative real numbers. Let $G_n\colon X^n \to \mathbb{R}^+$, $(n\ge 3)$ be a function satisfying the following properties:
\begin{itemize}
\item [[G 1]] $G_n(x_1,x_2,...,x_n)=0$ if $x_1=x_2=\dots =x_n$,
\item [[G 2]] $G_n(x_1,x_1,...,x_1,x_2)>0$ for all $x_1$, $x_2 \in X$ with $x_1\neq x_2$,
\item [[G 3]] $G_n(x_1,x_1,...,x_1,x_2)\leq G_n(x_1,x_2,...,x_n)$ for all $x_1$, $x_2,... ,x_n \in X$ with the condition that any two of the points $x_2,\cdots ,x_n$ are distinct,
\item [[G 4]] $G_n(x_1,x_2,...,x_n)=G_n(x_{\pi(1)},x_{\pi (2)},...,x_{\pi(n)})$, for all $x_1$, $x_2,...,x_n \in X$ and every permutation $\pi$ of $\{1,2,...n\}$,  
\item [[G 5]] $G_n(x_1,x_2,...,x_n)\le G_n(x_1,x_{n+1},...,x_{n+1})+G_n(x_{n+1},x_2,...,x_n)$ for all\\ $x_1$,$x_2,...,x_n,x_{n+1}\in X$. 
 
\end{itemize}
Then the function $G_n$ is called a \emph{Generalized $n$-metric} on $X$, and the pair $(X,G_n)$ a $ \emph{Generalized $n$-metric space}$.\\ 
From now on we always have $n\ge 3$ for $(X,G_n)$ to be a generalized $n$-metric space.

\subsubsection{Example} Define a function $\rho\colon \mathbb{R}^n \to \mathbb{R}^+$,$(n\ge3)$ by
\begin{equation*}
\rho(x_1,x_2,\dots ,x_n)= \text{max} \{ \left|x_r-x_s\right|\colon r,s\in \{1,2,...n\},r\neq s \}
\end{equation*}
for all $x_1$, $x_2,...,x_n \in X$. Then $( \mathbb{R}, \rho)$ is a generalized $n$-metric space.

\subsubsection{Example}
\label{exmpl} For any metric space $(X,d)$, the following functions define generalized $n$-metrics on $X$:
\begin{itemize}
\item [(1)] $K_1^d(x_1,x_2,...,x_n)=\sum_{r} \sum_{s}d(x_r,x_s)$,

\item [(2)] $K_2^d(x_1,x_2,...,x_n)=\text{max}\{d(x_r,x_s)\colon r,s\in \{1,2,...,n\},r\neq s\}$.
\end{itemize}

\begin{prop}
\label{inequalty}
Let $G_n\colon X^n \to \mathbb {R}^+$,$(n\ge 3)$ be a generalized $n$-metric defined on $X$, then for $x$,$y$ $\in X$ we have\\
\begin{equation}
\label{ineq1}
G_n(x,y,y,\dots ,y)\leq (n-1) G_n(y,x,x,\dots ,x)
\end{equation}
\end{prop}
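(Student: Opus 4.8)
The plan is to convert the $n-1$ copies of $y$ on the left of \eqref{ineq1} into copies of $x$ one at a time, paying $G_n(y,x,\dots,x)$ for each conversion, and then to add up the $n-1$ payments. For $0\le k\le n-1$ put
\[
a_k:=G_n(\underbrace{x,\dots,x}_{k},\underbrace{y,\dots,y}_{\,n-k}).
\]
Then the left-hand side of \eqref{ineq1} is $a_1$, and since [G 4] gives $G_n(y,x,\dots,x)=G_n(x,\dots,x,y)=a_{n-1}$, the right-hand side is $(n-1)a_{n-1}$. So it suffices to prove $a_1\le (n-1)a_{n-1}$.

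The crux is the one-step estimate
\[
a_k\le a_{n-1}+a_{k+1}\qquad(1\le k\le n-2).
\]
To obtain it, first use [G 4] to bring one $y$ into the first coordinate, writing $a_k=G_n(y,\underbrace{x,\dots,x}_{k},\underbrace{y,\dots,y}_{\,n-k-1})$; then apply the tetrahedral-type inequality [G 5] with $x_1=y$, with $(x_2,\dots,x_n)$ the string consisting of $k$ copies of $x$ followed by $n-k-1$ copies of $y$, and with the inserted point $x_{n+1}=x$. The two terms on the right of [G 5] are $G_n(y,x,\dots,x)=a_{n-1}$ and $G_n(x,\underbrace{x,\dots,x}_{k},\underbrace{y,\dots,y}_{\,n-k-1})=a_{k+1}$ (using [G 4] once more to collect equal entries), which is precisely the claimed bound. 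Observe that this argument uses only axioms [G 4] and [G 5].

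Finally I would iterate the one-step estimate:
\[
a_1\le a_{n-1}+a_2\le 2a_{n-1}+a_3\le\cdots\le(n-2)a_{n-1}+a_{n-1}=(n-1)a_{n-1},
\]
where the last inequality is the estimate at $k=n-2$ (for which $a_{k+1}=a_{n-1}$), and $n\ge 3$ guarantees there is at least one step to perform. This proves \eqref{ineq1}. I do not anticipate a genuine obstacle; the only point demanding attention is the bookkeeping --- checking that the numbers of $x$'s and $y$'s match up at the extreme indices $k=1$ and $k=n-2$, and that [G 5] (which is valid for arbitrary arguments, with no distinctness requirement) is being fed an admissible tuple at each stage.
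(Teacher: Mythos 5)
Your proof is correct, and it supplies in full detail exactly the argument the paper waves off as ``trivial'' from [G~5]: repeatedly inserting $x_{n+1}=x$ via [G~5] (together with the symmetry [G~4]) to trade the $y$'s for $x$'s one at a time, at a cost of $G_n(y,x,\dots,x)$ per step. The bookkeeping at the endpoints $k=1$ and $k=n-2$ checks out, so nothing further is needed.
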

\begin{proof} Using [G 5] it is trivial to prove the result.
\end{proof}
\subsection{Definition} Let $(X,G_n)$ be a generalized $n$-metric space, then for $x_0\in X$,$r>0$, the \emph{$G_n$-ball} with centre $x_0$ and radius $r$ is
\begin{equation*}
B_G(x_0,r)=\{ y\in X \colon G_n(x_0,y,y,\dots ,y)<r \}
\end{equation*}
\begin{prop}Let $(X,G_n)$ be a generalized $n$-metric space, then the $G_n$-ball is open in $X$.
\end{prop}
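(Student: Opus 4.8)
The plan is to work with the topology $\tau(G_n)$ on $X$ whose members are exactly those $A\subseteq X$ such that for every $x\in A$ there is some $s>0$ with $B_G(x,s)\subseteq A$; with this understanding, proving that $B_G(x_0,r)$ is open amounts to showing that each of its points is an interior point. So I would fix an arbitrary $y\in B_G(x_0,r)$, which by definition of the ball means $G_n(x_0,y,y,\dots ,y)<r$, and then set $\delta:=r-G_n(x_0,y,y,\dots ,y)>0$. The whole proof reduces to the single claim $B_G(y,\delta)\subseteq B_G(x_0,r)$.

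To establish the claim, let $z\in B_G(y,\delta)$, so $G_n(y,z,z,\dots ,z)<\delta$. Now apply the rectangle inequality [G 5] with the specialization $x_1\mapsto x_0$, $x_2=x_3=\dots =x_n\mapsto z$ and $x_{n+1}\mapsto y$, which gives $G_n(x_0,z,z,\dots ,z)\le G_n(x_0,y,y,\dots ,y)+G_n(y,z,z,\dots ,z)$. Plugging in $G_n(y,z,z,\dots ,z)<\delta$ and the definition of $\delta$ yields $G_n(x_0,z,z,\dots ,z)<G_n(x_0,y,y,\dots ,y)+\delta=r$, i.e. $z\in B_G(x_0,r)$. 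Since $z\in B_G(y,\delta)$ was arbitrary we get $B_G(y,\delta)\subseteq B_G(x_0,r)$, and since $y$ was an arbitrary point of $B_G(x_0,r)$ it follows that $B_G(x_0,r)$ is open.

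I do not expect a genuine obstacle here; the only points requiring care are (i) choosing $\delta$ precisely as the "slack" $r-G_n(x_0,y,\dots ,y)$ so that the strict inequality survives the estimate, and (ii) selecting the correct instance of [G 5], namely one in which the center $x_0$ stays in the first coordinate and the auxiliary point $y$ is the one inserted/repeated, so that the two terms on the right-hand side are literally $G_n(x_0,y,\dots ,y)$ and $G_n(y,z,\dots ,z)$, matching the definitions of the two balls involved. The full symmetry [G 4] is what makes "the" $G_n$-ball unambiguous, and if one wished to run the argument with the center sitting in another coordinate slot one could convert back to the form above via [G 4] or Proposition \ref{inequalty}; but with [G 5] as stated, no such appeal is actually needed.
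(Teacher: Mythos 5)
Your proof is correct and is essentially the argument the paper intends but omits (its entire proof reads ``The proof is straightforward''): the choice of slack $\delta=r-G_n(x_0,y,\dots ,y)$ together with the instance of [G~5] in which $x_0$ stays in the first slot and $y$ is the inserted point is exactly the right computation. No gap.
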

\begin{proof} The proof is straightforward.
\end{proof}

Hence the collection of all such balls in $X$ is closed under arbitrary union and finite intersection and therefore induces a topology on $X$ called the generalized $n$-metric topology $\Im (G_n)$ generated by the generalized $n$-metric on $X$.\\
From example~\ref{exmpl} it is clear that for a given metric we can always define generalized $n$-metrics. The converse is also true for if $G_n$ is a generalized $n$-metric then we can define a metric $d_G$ as follows-
\begin{equation*}
d_G(x,y)=G_n(x,y,y,\dots ,y)+G_n(x,x,\dots ,x,y)
\end{equation*}

\begin{prop} Let $B_{d_G}(x,r)$ denote the open ball in the metric space $(X,d_G)$ and $B_G(x,r)$ the $G_n$-ball in the correponding generalized $n$-metric space $(X,G_n)$. Then we have
\begin{equation*}
B_G(x,\frac{r}{n})\subseteq B_{d_G}(x,r)
\end{equation*}
\end{prop}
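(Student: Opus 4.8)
The plan is to prove the inclusion by the standard elementwise argument: pick an arbitrary $y\in B_G(x,\tfrac{r}{n})$ and show that $d_G(x,y)<r$, so that $y\in B_{d_G}(x,r)$. By the definition of the $G_n$-ball, the hypothesis $y\in B_G(x,\tfrac{r}{n})$ says precisely that $G_n(x,y,y,\dots,y)<\tfrac{r}{n}$. Since $d_G(x,y)=G_n(x,y,y,\dots,y)+G_n(x,x,\dots,x,y)$, it therefore suffices to bound $d_G(x,y)$ by a suitable multiple of $G_n(x,y,y,\dots,y)$.

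The key step is to control the second summand $G_n(x,x,\dots,x,y)$. First I would invoke the full symmetry axiom [G~4] to rewrite $G_n(x,x,\dots,x,y)=G_n(y,x,x,\dots,x)$, since the second tuple is merely a permutation of the first. Then I would apply Proposition~\ref{inequalty} with the roles of $x$ and $y$ interchanged — which is legitimate because that proposition holds for every pair of points of $X$ — to get $G_n(y,x,x,\dots,x)\le (n-1)\,G_n(x,y,y,\dots,y)$. Combining these two facts gives
\[
d_G(x,y)=G_n(x,y,\dots,y)+G_n(x,\dots,x,y)\le G_n(x,y,\dots,y)+(n-1)G_n(x,y,\dots,y)=n\,G_n(x,y,\dots,y).
\]

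Finally, substituting the hypothesis $G_n(x,y,\dots,y)<\tfrac{r}{n}$ yields $d_G(x,y)<n\cdot\tfrac{r}{n}=r$, i.e.\ $y\in B_{d_G}(x,r)$, which establishes the desired inclusion. I do not anticipate a genuine obstacle here; the only points requiring care are (i) using [G~4] to recognize that $G_n(x,x,\dots,x,y)$ equals $G_n(y,x,\dots,x)$ rather than $G_n(x,y,\dots,y)$, and (ii) applying Proposition~\ref{inequalty} in the correct direction, namely to the reversed tuple $G_n(y,x,\dots,x)$. Everything else is bookkeeping with the factor $n$.
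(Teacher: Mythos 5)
Your argument is correct and is essentially identical to the paper's own proof: both use [G~4] together with Proposition~\ref{inequalty} (with the roles of $x$ and $y$ swapped) to bound $G_n(x,x,\dots,x,y)$ by $(n-1)G_n(x,y,\dots,y)$, and then sum the two terms of $d_G(x,y)$ to get the bound $n\cdot\frac{r}{n}=r$. No issues.
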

\begin{proof} Let $y\in B_G(x,\frac{r}{n})$ then $G_n(x,y,y,\dots ,y)<\frac{r}{n}$. From ~\eqref{ineq1} and [G 4] we have 
\begin{equation*}
G_n(x,x,\dots ,x,y) \le (n-1)G_n(x,y,y,...,y)<(n-1)r/n
\end{equation*}
Therefore
\begin{multline*} d_G(x,y)=G_n(x,y,y,\dots ,y)+G_n(x,x,\dots ,x,y) < \frac{r}{n}+(n-1)\frac{r}{n}=r \\
\end{multline*}
Henc we have $y\in B_{d_G}(x,r)$ and therefore $B_G(x,\frac{r}{n})\subseteq B_{d_G}(x,r)$
\end{proof}

This indicates that the topology induced by the generalized $n$-metric on $X$ coincides with the metric topology induced by the metric $d_G$. Thus every generalized $n$-metric space is topologically equivalent to a metric space.

\subsection{Definition} Let $(X,G_n)$ be a generalized $n$-metric space. A sequence $<x_m>$ in $X$ is said to be \emph{$G_n$-convergent} if it converges to a point $x$ in the generalized $n$-metric topology $\Im (G_n)$ generated by the $G_n$-metric on $X$.

\begin{prop}
\label{convsq} Let $G_r\colon X^r \to \mathbb{R}^+$, $(r\ge 3)$ be a generalized $r$-metric defined on $X$. Then for a sequence $<x_n>$ in $X$ and $x\in X$ the following are equivalent:
\begin{itemize}
\item [(1)] The sequence $<x_n>$ is $G_r$-convergent to $x$.
\item [(2)] $d_G(x_n,x)\rightarrow 0$ as $n\rightarrow \infty$.
\item [(3)] $G_r(x_n,x_n,...,x_n,x)\rightarrow 0$ as $n\rightarrow \infty$.
\item [(4)] $G_r(x_n,x,...,x)\rightarrow 0$ as $n\rightarrow \infty$.
\end{itemize}
\end{prop}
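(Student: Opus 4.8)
The plan is to prove the chain of equivalences in two stages: first $(2)\Leftrightarrow(3)\Leftrightarrow(4)$, which are purely quantitative statements and follow at once from Proposition~\ref{inequalty} together with the full symmetry [G~4]; and then $(1)\Leftrightarrow(2)$, which I would deduce from the fact, already recorded above, that $\Im(G_r)$ is exactly the metric topology of $d_G$.

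For the quantitative part, set $a_m=G_r(x_m,x,x,\dots ,x)$ and $b_m=G_r(x_m,x_m,\dots ,x_m,x)$. By [G~4] we may rewrite $b_m=G_r(x,x_m,\dots ,x_m)$, and, again using [G~4] in the definition of $d_G$, we get $d_G(x_m,x)=a_m+b_m$. Applying \eqref{ineq1} once with $x$ and $x_m$ in the stated roles and once with the roles reversed yields $a_m\le (r-1)b_m$ and $b_m\le (r-1)a_m$. From these three relations it is immediate that ``$a_m\to 0$'', ``$b_m\to 0$'' and ``$a_m+b_m\to 0$'' are equivalent, and this is precisely $(4)\Leftrightarrow(3)\Leftrightarrow(2)$; nothing beyond these elementary estimates is required here.

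For $(1)\Leftrightarrow(2)$ the key step is to check that the family $\{B_G(x,\varepsilon):\varepsilon>0\}$ is a neighbourhood base at $x$ in $\Im(G_r)$. Each $B_G(x,\varepsilon)$ is open by the earlier proposition on $G_n$-balls, and the two inclusions $B_G(x,\varepsilon/r)\subseteq B_{d_G}(x,\varepsilon)$ (established above) and $B_{d_G}(x,\varepsilon)\subseteq B_G(x,\varepsilon)$ (immediate, since $G_r(x,y,\dots ,y)\le d_G(x,y)$) show that this family generates the same neighbourhood filter at $x$ as the usual $d_G$-balls. Consequently $x_m\to x$ in $\Im(G_r)$ if and only if for every $\varepsilon>0$ one eventually has $G_r(x,x_m,\dots ,x_m)<\varepsilon$, i.e. $b_m\to 0$, which by the previous paragraph is condition $(2)$.

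I expect the main obstacle to be this last identification — making precise that convergence in the topology $\Im(G_r)$ can be tested using only the $G_r$-balls centred at the limit, even though $G_n$ is not symmetric in the ``radial'' variable the way an ordinary metric is; once the neighbourhood base is pinned down, the whole proposition reduces to the short inequality chain coming from \eqref{ineq1}, and no genuine computation remains.
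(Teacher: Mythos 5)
Your proof is correct and follows essentially the same route as the paper: the identity $d_G(x_m,x)=G_r(x_m,x,\dots,x)+G_r(x_m,\dots,x_m,x)$ together with the two-sided application of \eqref{ineq1} gives $(2)\Leftrightarrow(3)\Leftrightarrow(4)$, and $(1)\Leftrightarrow(2)$ rests on the coincidence of $\Im(G_r)$ with the $d_G$-topology. The only difference is that you make explicit the reverse ball inclusion $B_{d_G}(x,\varepsilon)\subseteq B_G(x,\varepsilon)$ needed for that coincidence, which the paper simply asserts after proving the forward inclusion.
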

\begin{proof} Since the topology induced by the $G_r$-metric on $X$ coincides with the metric topology induced by the metric $d_G$, hence (1)$\Leftrightarrow$(2).\\
Now 
\begin{equation}
\label{equ1}
d_G(x_n,x)=G_r(x_n,x,\dots ,x)+G_r(x_n,x_n,\dots ,x_n,x)
\end{equation}
Hence $G_r(x_n,x,...,x)\rightarrow 0$ and $G_r(x_n,x_n,...,x_n,x)\rightarrow 0$ whenever $d_G(x_n,x)\rightarrow 0$. Thus (2)$\Rightarrow$(3) and (2)$\Rightarrow$(4).\\
From ~\eqref{ineq1} we have
\begin{equation}
\label{equ2}
G_r(x_n,x,...,x)\le (r-1)G_r(x,x_n,...,x_n)
\end{equation}
Thus (3)$\Rightarrow$(4). Similarly (4)$\Rightarrow$(3).\\
Also from ~\eqref{equ1} and ~\eqref{equ2} we have 
\begin{equation*}
d_G(x_n,x)\le rG_r(x_n,x_n,...,x_n,x)
\end{equation*}
Therefore (3)$\Rightarrow$(2).
\end{proof}

\subsection{Definition}
Let $(X,G^X_n)$ and $(Y,G^Y_n)$ be generalized $n$-metric spaces. A function $f\colon X \to Y$ is said to be \emph{Generalized $n$-continuous} at a point $x\in X$ if $f^{-1}(B_{G^Y_n}(f(x),r))\in \Im(G^X_n)$, for all $r>0$. The function $f$ is said to be generalized $n$-continuous if it is generalized $n$-continuous at all points of $X$.\\

Since every generalized $n$-metric space is topologically equivalent to a metric space, hence we have the following result:
\begin{prop}
\label{cont}
 Let $(X,G^X_n)$ and $(Y,G^Y_n)$ be generalized $n$-metric spaces. A function $f\colon X \to Y$ is said to be generalized $n$-continuous at a point $x\in X$ if and only if it is generalized $n$-sequentially continuous at $x$; that is, whenever the sequence $<x_m>$ is $G^X_n$-convergent to $x$, the sequence $<f(x_m)>$ is $G^Y_n$-convergent to $f(x)$.
\end{prop}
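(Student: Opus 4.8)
The proof will rest entirely on the two facts already in hand: that $\Im(G^X_n)$ and $\Im(G^Y_n)$ are the metric topologies of the associated metrics (write $d_X$ for $d_{G^X_n}$ and $d_Y$ for $d_{G^Y_n}$), and that, by Proposition~\ref{convsq}, $G_n$-convergence of a sequence coincides with convergence in the associated metric. Granting these, the statement becomes exactly the classical equivalence, in metric spaces, of continuity at a point with sequential continuity at a point, and all the work lies in checking that the $G_n$-phrased hypotheses say the metric things.

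The first and essential step is this translation. Since the $G^X_n$-balls form a base for $\Im(G^X_n)=\Im(d_X)$, and since $d_G(x,y)=G_n(x,y,\dots ,y)+G_n(x,\dots ,x,y)\ge G_n(x,y,\dots ,y)$ gives $B_{d_G}(x,r)\subseteq B_G(x,r)$ while $B_G(x,r/n)\subseteq B_{d_G}(x,r)$ was established above, the generalized $n$-balls and the metric balls are mutually cofinal neighbourhood bases at each point; the same holds on the $Y$ side. Hence ``$f$ is generalized $n$-continuous at $x$'' is the assertion that for every $r>0$ there is $\delta>0$ with $f\big(B_{d_X}(x,\delta)\big)\subseteq B_{d_Y}(f(x),r)$, i.e.\ ordinary $\varepsilon$-$\delta$ continuity of $f\colon(X,d_X)\to(Y,d_Y)$ at $x$. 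Likewise, by Proposition~\ref{convsq}, $<x_m>$ is $G^X_n$-convergent to $x$ iff $d_X(x_m,x)\to 0$ and $<f(x_m)>$ is $G^Y_n$-convergent to $f(x)$ iff $d_Y(f(x_m),f(x))\to 0$, so ``$f$ is generalized $n$-sequentially continuous at $x$'' is precisely ordinary sequential continuity of $f$ at $x$ in $d_X,d_Y$.

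The second step is then the textbook argument, in both directions. For necessity, assume $f$ is generalized $n$-continuous at $x$ and $<x_m>$ is $G^X_n$-convergent to $x$; fix $r>0$, take the matching $\delta$, and use $d_X(x_m,x)\to 0$ to get $x_m\in B_{d_X}(x,\delta)$, hence $f(x_m)\in B_{d_Y}(f(x),r)$, for all large $m$; thus $d_Y(f(x_m),f(x))\to 0$, that is, $<f(x_m)>$ is $G^Y_n$-convergent to $f(x)$. For sufficiency I would argue by contraposition: if $f$ is not generalized $n$-continuous at $x$, there is $r>0$ such that for each $m\in\mathbb N$ some $x_m$ satisfies $d_X(x_m,x)<1/m$ yet $d_Y(f(x_m),f(x))\ge r$; then $<x_m>$ is $G^X_n$-convergent to $x$ while $<f(x_m)>$ fails to be $G^Y_n$-convergent to $f(x)$, contradicting generalized $n$-sequential continuity at $x$.

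I do not expect a genuine obstacle: once the dictionary of the second paragraph is in place, the remainder is the standard metric-space reasoning. The only point demanding care is that translation, and within it the single $G_n$-specific input is the two-sided ball comparison, which ultimately rests on \eqref{ineq1} and the full symmetry axiom [G~4] — precisely the feature distinguishing $G_n$ from the $K$-metric of \cite{KHA}. One may even bypass the explicit ball estimates altogether by appealing directly to the already-noted topological equivalence of $\Im(G_n)$ with the $d_G$-metric topology, after which both continuity and convergence on either side are literally the metric notions and the proposition is immediate.
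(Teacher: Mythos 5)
Your proposal is correct and follows the same route the paper takes: the paper offers no written proof at all, merely prefacing the proposition with the remark that every generalized $n$-metric space is topologically equivalent to the metric space $(X,d_G)$, after which the statement is the classical metric-space equivalence of continuity and sequential continuity at a point. Your version simply supplies the details the paper leaves implicit --- the two-sided ball comparison and the appeal to Proposition~\ref{convsq} --- and is a sound filling-in of that same argument.
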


\begin{prop} Let $(X,G_n)$ be a generalized $n$-metric space, then the function $G_n(x_1,x_2,...,x_n)$ is jointly continuous in the variables $x_1$,$x_2,...,x_n$.
\end{prop}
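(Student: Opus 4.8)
The plan is to deduce joint continuity from a single Lipschitz-type estimate together with the metrizability already established. Since each factor of $X^n$ carries the metric $d_G$, and a finite product of metric spaces is again a metric space (e.g.\ under $\max_i d_G(x_i,y_i)$), the product topology on $X^n$ is metrizable; hence it suffices to show that $G_n$ is jointly \emph{sequentially} continuous, i.e.\ that $G_n(x_1^{(m)},\dots,x_n^{(m)})\to G_n(x_1,\dots,x_n)$ whenever $x_i^{(m)}\to x_i$ in $(X,G_n)$ for every $i$. By Proposition~\ref{convsq}, the convergence $x_i^{(m)}\to x_i$ is equivalent to $d_G(x_i^{(m)},x_i)\to 0$.

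The heart of the matter is the inequality
\begin{equation*}
\bigl|G_n(x_1,\dots,x_n)-G_n(y_1,\dots,y_n)\bigr|\;\le\;\sum_{i=1}^{n}d_G(x_i,y_i),\qquad x_i,y_i\in X .
\end{equation*}
To prove it I would replace the arguments one coordinate at a time. First apply [G 5] with auxiliary point $y_1$ to $G_n(x_1,x_2,\dots,x_n)$, splitting off $G_n(x_1,y_1,\dots,y_1)$ and leaving $G_n(y_1,x_2,\dots,x_n)$; then use the full symmetry [G 4] to move $x_2$ into the first slot, apply [G 5] with auxiliary point $y_2$, and iterate. After $n$ such steps (each one a permutation from [G 4] followed by an application of [G 5]) one reaches
\begin{equation*}
G_n(x_1,\dots,x_n)\;\le\;\sum_{i=1}^{n}G_n(x_i,y_i,\dots,y_i)+G_n(y_1,\dots,y_n).
\end{equation*}
Interchanging the roles of the $x_i$ and the $y_i$ gives the companion bound with $G_n(y_i,x_i,\dots,x_i)$ in place of $G_n(x_i,y_i,\dots,y_i)$, and since $d_G(x_i,y_i)=G_n(x_i,y_i,\dots,y_i)+G_n(y_i,x_i,\dots,x_i)$ (using [G 4] to rewrite the second summand of $d_G$), combining the two bounds yields the displayed estimate; if one prefers to retain only terms of the form $G_n(x_i,y_i,\dots,y_i)$, one may instead feed the companion bound through \eqref{ineq1}.

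With this estimate the conclusion is immediate: putting $y_i=x_i^{(m)}$,
\begin{equation*}
\bigl|G_n(x_1^{(m)},\dots,x_n^{(m)})-G_n(x_1,\dots,x_n)\bigr|\;\le\;\sum_{i=1}^{n}d_G(x_i^{(m)},x_i)\;\longrightarrow\;0 ,
\end{equation*}
so $G_n$ is jointly sequentially continuous, and therefore, by the metrizability of $X^n$, jointly continuous. I expect the only genuine obstacle to be the bookkeeping in the middle step: because [G 5] inserts the auxiliary point only into the first coordinate, each replacement must be preceded by exactly the right permutation from [G 4], and one must verify that the term left over after $n$ steps is indeed a permutation of $G_n(y_1,\dots,y_n)$. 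The remaining estimates are routine.
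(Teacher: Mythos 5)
Your proposal is correct and follows essentially the same route as the paper: both obtain the two one-sided bounds by replacing one coordinate at a time via [G 4] and [G 5], and then combine them (the paper folds the two orientations together using \eqref{ineq1}, while you package them as a Lipschitz bound in $\sum_i d_G(x_i,y_i)$ — a cosmetic difference). The bookkeeping you flag does work out, since [G 4] gives full symmetry and the leftover term after $n$ replacements is a permutation of $G_n(y_1,\dots,y_n)$.
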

\begin{proof} Let $<x_{m_1}>$,$<x_{m_2}>$,...,$<x_{m_n}>$ be the sequences in the generalized $n$-metric space $(X,G_n)$ such that $x_{m_1}\rightarrow x_1$, $x_{m_2}\rightarrow x_2$,...,$x_{m_n}\rightarrow x_n$.
Then by [G 4] and [G 5] we can show that
\begin{align*}
G_n(x_{m_1},x_{m_2},...,x_{m_n})-G_n(x_1,x_2,...,x_n)&\le G_n(x_{m_1},x_1,...,x_1)+G_n(x_{m_2},x_2,...,x_2)\\
&\quad +\dots +G_n(x_{m_n},x_n,...,x_n)
\end{align*}
Similarly
\begin{align*}
G_n(x_1,x_2,...,x_n)-G_n(x_{m_1},x_{m_2},...,x_{m_n})&\le G_n(x_1,x_{m_1},...,x_{m_1})+G_n(x_2,x_{m_2},...,x_{m_2})\\
&\quad +\dots +G_n(x_n,x_{m_n},...,x_{m_n})
\end{align*}
Therefore on using ~\eqref{ineq1} we have
\begin{align*}
\left|G_n(x_{m_1},x_{m_2},...,x_{m_n})-G_n(x_1,x_2,...,x_n)\right| \le &(n-1)\{G_n(x_1,x_{m_1},...,x_{m_1})\\
&+G_n(x_2,x_{m_2},...,x_{m_2})+\dots +G_n(x_n,x_{m_n},...,x_{m_n})\}
\end{align*}
Making $m_1\rightarrow \infty$, $m_2\rightarrow \infty$,...,$m_n\rightarrow \infty$ we have
\begin{equation*}
G_n(x_{m_1},x_{m_2},...,x_{m_n})\rightarrow G_n(x_1,x_2,...,x_n)
\end{equation*}
Hence the result follows. 
\end{proof}

\subsection{Definition}
\label{cauchy} Let $(X,G_m)$ be a generalized $m$-metric space. A sequence $<x_n>$ in $X$ is said to be $G_m$-Cauchy if for every $\epsilon >0$, there exists $N\in \mathbb{N}$ such that
\begin{equation*}
G_m(x_{n_1},x_{n_2},...,x_{n_m})< \epsilon \; \text{for all} \; n_1,n_2,...,n_m\ge N
\end{equation*}
\begin{prop}
Let $(X,G_m)$ be a generalized $m$-metric space. A sequence $<x_n>$ in $X$ is $G_m$-Cauchy if and only if for every $\epsilon >0$, there exists $N\in \mathbb{N}$ such that
\begin{equation}
\label{condcchy}
G_m(x_{n_1},x_{n_2},...,x_{n_2}) <\epsilon \; \text{for all} \; n_1,n_2\ge N
\end{equation}
\end{prop}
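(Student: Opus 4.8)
The plan is to prove the two implications separately; the forward implication is immediate, and the converse rests on an auxiliary ``collapsing'' inequality obtained from [G 4] and [G 5].

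\emph{Necessity.} Suppose $\langle x_n\rangle$ is $G_m$-Cauchy in the sense of Definition~\ref{cauchy}. Given $\epsilon>0$ choose $N$ with $G_m(x_{n_1},\dots,x_{n_m})<\epsilon$ whenever $n_1,\dots,n_m\ge N$, and specialize $n_3=n_4=\cdots=n_m=n_2$. This yields \eqref{condcchy} at once.

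\emph{Sufficiency.} The key step is to establish that for all $y_1,\dots,y_m,a\in X$,
\begin{equation*}
G_m(y_1,y_2,\dots,y_m)\le \sum_{i=1}^{m} G_m(y_i,a,a,\dots,a).
\end{equation*}
I would prove this by peeling off one coordinate at a time. First apply [G 5] with insertion point $a$ to get $G_m(y_1,y_2,\dots,y_m)\le G_m(y_1,a,\dots,a)+G_m(a,y_2,\dots,y_m)$. In the second summand, use the full symmetry [G 4] to bring $y_m$ into the first slot, apply [G 5] again with insertion point $a$ to split off $G_m(y_m,a,\dots,a)$, and repeat; each round replaces one of $y_m,y_{m-1},\dots,y_2$ by $a$ (the $a$'s already present sitting harmlessly in the later slots), and after $m-1$ rounds every coordinate has been collapsed to $a$, up to symmetry, giving the stated bound with exactly $m$ terms. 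With this inequality in hand, given $\epsilon>0$ apply \eqref{condcchy} with $\epsilon/m$ in place of $\epsilon$ to obtain $N$ such that $G_m(x_p,x_q,x_q,\dots,x_q)<\epsilon/m$ for all $p,q\ge N$. Then for any $n_1,\dots,n_m\ge N$, taking $a=x_N$ in the collapsing inequality gives
\begin{equation*}
G_m(x_{n_1},\dots,x_{n_m})\le\sum_{i=1}^m G_m(x_{n_i},x_N,\dots,x_N)<m\cdot\frac{\epsilon}{m}=\epsilon,
\end{equation*}
so $\langle x_n\rangle$ is $G_m$-Cauchy.

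The only delicate point is the bookkeeping in the collapsing inequality: one must verify that at each stage [G 5] is being applied to a genuine $m$-tuple and that precisely $m$ summands of the form $G_m(\cdot,a,\dots,a)$ are produced. This is routine once the reordering is tracked via [G 4], and it uses neither [G 1], [G 2], nor [G 3]. One could alternatively isolate the collapsing step as a separate lemma and prove it by induction on the number of coordinates different from $a$.
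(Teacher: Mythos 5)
Your proof is correct and follows essentially the same route as the paper: both directions rest on iterating [G 5] together with the symmetry [G 4] to reduce a general $m$-tuple to terms of the two-index form in \eqref{condcchy}. The only cosmetic difference is that you collapse every coordinate onto the single anchor $a=x_N$ (producing $m$ summands and rescaling $\epsilon$ to $\epsilon/m$), whereas the paper cascades through the tuple's own later coordinates and is content with the final bound $(m-1)\epsilon$.
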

\begin{proof} If $<x_n>$ is $G_m$-Cauchy then the result follows from definition~\ref{cauchy}.\\
Conversely suppose that the condition~\eqref{condcchy} holds for a sequence $<x_n>$ in $X$. Then for $n_1$,$n_2$,$n_3\ge N$ we have from [G 5]
\begin{align*}
\begin{split}
G_m(x_{n_1},x_{n_2},x_{n_3},...,x_{n_3})&\le G_m(x_{n_1},x_{n_3},...,x_{n_3})+G_m(x_{n_3},x_{n_2},x_{n_3},...,x_{n_3})\\
&<\epsilon+\epsilon=2\epsilon
\end{split}
\end{align*}
Continuing the above argument, for $n_1$,$n_2$,...,$n_m\ge N$ we have
\begin{equation*}
G_m(x_{n_1},x_{n_2},\dots ,x_{n_m})<(m-1)\epsilon
\end{equation*}
i.e. $<x_n>$ is $G_m$-Cauchy.
\end{proof}

\begin{prop}
Every $G_n$-convergent sequence in a generalized $n$-metric space is $G_n$-Cauchy.
\end{prop}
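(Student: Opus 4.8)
The plan is to reduce the statement to the characterization of $G_n$-Cauchy sequences given in the preceding proposition, so that it suffices to control the two-index quantity $G_n(x_{n_1},x_{n_2},\dots,x_{n_2})$. First I would let $\langle x_m\rangle$ be a sequence in $(X,G_n)$ that is $G_n$-convergent to some point $x\in X$. By Proposition~\ref{convsq} (applied with $r=n$), this convergence is equivalent to $G_n(x_m,x,\dots,x)\to 0$ as $m\to\infty$ (part (4)), and also to $G_n(x_m,x_m,\dots,x_m,x)\to 0$ (part (3)); I would record the first of these for use below.

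Next, fix $\epsilon>0$. The key step is a single application of the tetrahedral-type inequality [G 5] with the auxiliary point $x$ inserted in the first slot: for any indices $n_1,n_2$,
\begin{equation*}
G_n(x_{n_1},x_{n_2},\dots,x_{n_2})\le G_n(x_{n_1},x,\dots,x)+G_n(x,x_{n_2},\dots,x_{n_2}).
\end{equation*}
For the second summand I would invoke inequality~\eqref{ineq1}, which gives $G_n(x,x_{n_2},\dots,x_{n_2})\le (n-1)\,G_n(x_{n_2},x,\dots,x)$, thereby expressing everything in terms of the single quantity $G_n(x_m,x,\dots,x)$ that tends to $0$. (Equivalently one could use the full symmetry axiom [G 4] to rewrite $G_n(x,x_{n_2},\dots,x_{n_2})$ as $G_n(x_{n_2},\dots,x_{n_2},x)$ and appeal to part (3) of Proposition~\ref{convsq} directly.)

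Then I would choose $N\in\mathbb{N}$ so large that $G_n(x_m,x,\dots,x)<\dfrac{\epsilon}{2(n-1)}$ for all $m\ge N$, which is possible since $G_n(x_m,x,\dots,x)\to 0$. For $n_1,n_2\ge N$ the displayed inequality then yields
\begin{equation*}
G_n(x_{n_1},x_{n_2},\dots,x_{n_2})<\frac{\epsilon}{2(n-1)}+(n-1)\cdot\frac{\epsilon}{2(n-1)}=\frac{\epsilon}{2(n-1)}+\frac{\epsilon}{2}\le\epsilon,
\end{equation*}
so condition~\eqref{condcchy} is satisfied. By the preceding proposition this means $\langle x_m\rangle$ is $G_n$-Cauchy, which completes the argument.

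As for difficulty, there is no genuine obstacle: the proof is a short combination of [G 5], the symmetry axiom [G 4] (entering through~\eqref{ineq1}), and the equivalent forms of $G_n$-convergence from Proposition~\ref{convsq}. The only point requiring a little care is bookkeeping the factor $n-1$ coming from~\eqref{ineq1}, which is handled by choosing the convergence threshold as $\epsilon/(2(n-1))$ rather than $\epsilon/2$; since $n\ge 3$ the resulting bound is comfortably below $\epsilon$.
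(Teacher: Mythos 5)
Your argument is correct and follows the same route the paper intends: the paper's proof is the one-line remark that the result follows from Proposition~\ref{convsq} and the two-index Cauchy criterion~\eqref{condcchy}, and your write-up simply supplies the details of that reduction via [G 5] and~\eqref{ineq1}. The bookkeeping with the factor $n-1$ is handled correctly, so nothing further is needed.
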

\begin{proof} The result follows from proposition~\ref{convsq} and ~\eqref{condcchy}.
\end{proof}

\subsection{Definition} A generalized $n$-metric space $(X,G_n)$ is said to be \emph{$G_n$-complete} if every $G_n$-Cauchy sequence in $(X,G_n)$ is $G_n$-convergent in $(X,G_n)$.
\begin{thm} Let $G_r\colon X^r\to \mathbb{R}^+$,$(r\ge 3)$ be a generalized $r$-metric and $(X,G_r)$ be a $G_r$-complete generalized $r$-metric space. Let $f$ and $g$ be self mappings on $X$ satisfying the following conditions:
\begin{itemize}
\item [(1)] $f(X)\subseteq g(X)$,
\item [(2)] $g$ is continuous,
\item [(3)] $G_r(f\xi_1,f\xi_2,\dots ,f\xi_r)\le q\, G_r(g\xi_1,g\xi_2,\dots,g\xi_r)$ for every $\xi_1,\xi_2,\dots,\xi_r\in X$ and $0<q<1$
\end{itemize}
Then $f$ and $g$ have a unique common fixed point in $X$ provided $f$ and $g$ commute.
\end{thm}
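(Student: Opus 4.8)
The plan is to carry out the Jungck iteration and show that its limit is the unique common fixed point. First I would fix $x_0\in X$ and use (1) to choose inductively $x_{k+1}$ with $gx_{k+1}=fx_k$, and write $u_k=gx_k=fx_{k-1}$. Feeding the arguments $\xi_1=x_{k-1}$, $\xi_2=\cdots=\xi_r=x_k$ into the contraction (3) gives $s_k:=G_r(u_k,u_{k+1},\dots ,u_{k+1})\le q\,G_r(u_{k-1},u_k,\dots ,u_k)=q\,s_{k-1}$, so $s_k\le q^k s_0$. Telescoping [G 5] along $u_n,u_{n+1},\dots ,u_m$ (taking $x_1=u_n$, $x_2=\cdots=x_r=u_m$, $x_{r+1}=u_{n+1}$ and recursing) yields $G_r(u_n,u_m,\dots ,u_m)\le s_n+\cdots+s_{m-1}\le q^n s_0/(1-q)$ for $m>n$, and the reversed-index estimate follows from \eqref{ineq1}. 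Hence $\langle u_n\rangle$ satisfies condition \eqref{condcchy}, so it is $G_r$-Cauchy, and by $G_r$-completeness $u_n\to z$ for some $z\in X$, equivalently $G_r(u_n,z,\dots ,z)\to 0$ by Proposition \ref{convsq}.

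Next I would identify $z$. Since $g$ is continuous it is sequentially continuous (Proposition \ref{cont}), so $u_n\to z$ gives $gu_n\to gz$; and commutativity gives $gu_{n+1}=g(gx_{n+1})=g(fx_n)=f(gx_n)=fu_n$, so $fu_n\to gz$ as well. Applying (3) to $\xi_1=u_n$, $\xi_2=\cdots=\xi_r=x_n$ gives $G_r(fu_n,fx_n,\dots ,fx_n)\le q\,G_r(gu_n,gx_n,\dots ,gx_n)$; letting $n\to\infty$ and using joint continuity of $G_r$ together with $fx_n=u_{n+1}\to z$ and $gx_n=u_n\to z$, both sides converge, giving $G_r(gz,z,\dots ,z)\le q\,G_r(gz,z,\dots ,z)$. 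As $0<q<1$ this forces $G_r(gz,z,\dots ,z)=0$, hence $gz=z$ by [G 2] (after rearranging via [G 4]). Then (3) with $\xi_1=z$, $\xi_2=\cdots=\xi_r=x_n$ gives $G_r(fz,fx_n,\dots ,fx_n)\le q\,G_r(z,u_n,\dots ,u_n)\to 0$, while $fx_n\to z$, so joint continuity of $G_r$ gives $G_r(fz,z,\dots ,z)=0$ and $fz=z$. Thus $fz=gz=z$ is a common fixed point.

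For uniqueness, if $z'$ is another common fixed point, then (3) applied to $\xi_1=z$, $\xi_2=\cdots=\xi_r=z'$ gives $G_r(z,z',\dots ,z')\le q\,G_r(z,z',\dots ,z')$, whence $G_r(z,z',\dots ,z')=0$ and $z=z'$ by [G 2].

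I expect the only real obstacle to be the limiting step that yields $gz=z$: it is the one point where condition (1) (to run the iteration), condition (4) (to get $fu_n\to gz$), continuity of $g$, and joint continuity of $G_r$ all come together, and care is needed to confirm that the four sequences $fu_n,\ fx_n,\ gu_n,\ gx_n$ converge to $gz,\ z,\ gz,\ z$ respectively before passing to the limit in inequality (3). The Cauchy estimate, by contrast, is routine once (3) is applied to consecutive iterates and [G 5] is telescoped.
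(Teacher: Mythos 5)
Your proposal is correct and follows essentially the same route as the paper: the Jungck iteration $gx_{k+1}=fx_k$, the contraction applied to consecutive iterates to get the geometric bound, telescoping [G 5] for the Cauchy estimate, the limit passage in (3) with $\xi_1=gx_n$ (your $u_n$), $\xi_k=x_n$ to get $gz=z$, and then a second application of (3) to get $fz=z$, with the same uniqueness argument. The only cosmetic difference is that for $fz=z$ you plug $z$ into the first slot while the paper plugs $x_n$ there; both limit computations are valid.
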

\begin{proof} Let $x_0$ be an arbitrary point in $X$. Since $f(X)\subseteq g(X)$ hence there exists a point $x_1$ such that $fx_0=gx_1$. In general we can choose $x_{n+1}$ such that $y_n=fx_n=gx_{n+1}$.Using (3) we have
\begin{equation*}
G_r(fx_n,fx_{n+1},\dots,fx_{n+1})\le q\, G_r(gx_n,gx_{n+1},\dots ,gx_{n+1})\\
=q\,G_r(fx_{n-1},fx_n,\dots ,fx_n)
\end{equation*}
Proceeding in above manner we have
\begin{align*}
\begin{split}
G_r(fx_n,fx_{n+1},\dots,fx_{n+1})&\le q^n\, G_r(fx_0,fx_1,\dots ,fx_1)\\
\Rightarrow G_r(y_n,y_{n+1},\dots,y_{n+1})&\le q^n\,G_r(y_0,y_1, \dots,y_1)
\end{split}
\end{align*}
We claim that the sequence $<y_n>$ in $X$ is $G_r$-Cauchy in $X$.\\
For all natural numbers $n$ and $m(>n)$ we have from [G 5]
\begin{align*}
\begin{split}
G_r(y_n,y_m,\dots,y_m)&\le G_r(y_n,y_{n+1},\dots,y_{n+1})+G_r(y_{n+1},y_{n+2},\dots,y_{n+2})+\dots\\
&\qquad \cdots +G_r(y_{m-1},y_m,\dots ,y_m)\\
&\le (q^n+q^{n+1}+\dots +q^{m-1})\,G_r(y_0,y_1,\dots ,y_1)\\
&\le (q^n+q^{n+1}+\dots)\,G_r(y_0,y_1,\dots ,y_1)\\
&=\frac{q^n}{1-q}\,G_r(y_0,y_1,\dots ,y_1)\rightarrow 0 \, \text{as} \, n,m \rightarrow \infty
\end{split}
\end{align*}
Thus the sequence $<y_n>$ is a $G_r$-Cauchy sequence in $X$. By completeness of $(X,G_r)$, there exists a point $u\in X$ such that $<y_n>$ is $G_r$-convergent to $u$.Since $y_n=fx_n=gx_{n+1}$ hence we have $\lim_{n\to \infty}y_n=\lim_{n\to \infty}gx_n=\lim_{n\to \infty}fx_n=u$.\\
Now $g$ is continuous hence
\begin{equation*}
\lim_{n\to \infty}ggx_n=\lim_{n\to \infty}gfx_n=gu
\end{equation*}
Also $f$ and $g$ commute, therefore
\begin{equation*}
\lim_{n\to \infty}fgx_n=\lim_{n\to \infty}gfx_n=\lim_{n\to \infty}ggx_n=gu
\end{equation*}
Taking $\xi_1=gx_n,\, \xi_k=x_n\,(2\le k\le r)$ in (3) we have 
\begin{equation*}
G_r(fgx_n,fx_n,\dots ,fx_n)\le q\,G_r(ggx_n,gx_n,\dots ,gx_n)
\end{equation*}
Making $n\to \infty$ we have
\begin{equation*}
G_r(gu,u,\dots ,u)\le q\, G_r(gu,u,\dots ,u)
\end{equation*}
Which gives $gu=u$. For otherwise $q\ge 1$ contradicting the fact that $0<q<1$.
Now by taking $\xi_1=x_n$, $\xi_k=u\, (2\le k\le r)$ in (3) we have
\begin{equation*}
G_r(fx_n,fu,\dots ,fu)\le q\,G_r(gx_n,gu,\dots ,gu)
\end{equation*}
Making $n\to \infty$ we have $fu=u$. Therefore we have $fu=gu=u$, i.e. $u$ is a common fixed point of $f$ and $g$.\\
For uniqueness of $u$, suppose that $v\neq u$ is such that $fv=gu=v$. Then we have $G_r(u,v,\dots ,v)>0$ and
\begin{align*}
\begin{split}
G_r(u,v,\dots ,v)&=G_r(fu,fv, \dots ,fv)\le q\,G_r(gu,gv,\dots ,gv)=q\, G_r(u,v,\dots ,v)\\
&<G_r(u,v,\dots ,v)
\end{split}
\end{align*} 
Thus we get a contradiction, hence we have $u=v$.
\end{proof}
\begin{thm} Let $G_r\colon X^r\to \mathbb{R}^+$,$(r\ge 3)$ be a generalized $r$-metric and $(X,G_r)$ be a $G_r$-complete generalized $r$-metric space. Let $f\colon X\to X$ be a mapping which satisfies the following condition for all $x_1,x_2,\dots ,x_r\in X$
\begin{align*}
\begin{split}
G_r(fx_1,fx_2, \dots, fx_r) \le k\,\text{max} &\{ G_r(x_1,x_2,\dots, x_r),G_r(x_1,fx_1,\dots ,fx_1),\dots , G_r(x_r,fx_r,\dots ,fx_r),\\
&G_r(x_1,fx_2,\dots ,fx_2), G_r(x_2,fx_3,\dots ,fx_3),\dots ,G_r(x_r,fx_1,\dots ,fx_1)\}
\end{split}
\end{align*}
Where $0\le k<1/2$. Then $f$ has a unique fixed point (say $u$) and $f$ is generalized $r$-continuous at $u$.
\end{thm}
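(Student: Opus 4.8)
The plan is to run the same Picard scheme as in the previous theorem, the new ingredient being the extraction of a geometric ratio from the max‑type estimate. Fix $x_0\in X$, set $x_{m+1}=fx_m$, and write $a_m=G_r(x_m,x_{m+1},\dots ,x_{m+1})$. The first move is to substitute $x_1=x_{m-1}$, $x_2=\dots =x_r=x_m$ into the contractive inequality: then $fx_1=x_m$ and $fx_j=x_{m+1}$ for $j\ge 2$, so the left side is $a_m$, the term $G_r(x_1,x_2,\dots ,x_r)$ and each ``diagonal'' term $G_r(x_j,fx_j,\dots ,fx_j)$ equals $a_{m-1}$ or $a_m$, the term $G_r(x_r,fx_1,\dots ,fx_1)$ equals $G_r(x_m,x_m,\dots ,x_m)=0$ by [G 1], and the only genuinely new entry among the $2r+1$ terms of the maximum is $G_r(x_{m-1},x_{m+1},\dots ,x_{m+1})$, which by [G 5] is at most $a_{m-1}+a_m$. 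Hence $a_m\le k(a_{m-1}+a_m)$, i.e.\ $a_m\le \tfrac{k}{1-k}\,a_{m-1}$. This is exactly where the hypothesis $k<1/2$ is used: it makes $h:=k/(1-k)<1$, so $a_m\le h^m a_0\to 0$.

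Next I would show $\langle x_m\rangle$ is $G_r$‑Cauchy. For $p>m$, iterating [G 5] gives $G_r(x_m,x_p,\dots ,x_p)\le a_m+a_{m+1}+\dots +a_{p-1}\le \tfrac{h^m}{1-h}a_0$, and the reversed order is bounded by $(r-1)$ times such a sum via \eqref{ineq1}; thus the criterion \eqref{condcchy} holds and the corresponding proposition makes $\langle x_m\rangle$ Cauchy. By $G_r$‑completeness there is $u\in X$ with $x_m\to u$. To identify $u$ as a fixed point I substitute $x_1=x_m$, $x_2=\dots =x_r=u$ (keeping in mind $fx_j=fu$, not $u$): the left side is $G_r(x_{m+1},fu,\dots ,fu)$, and by Proposition~\ref{convsq} and \eqref{ineq1} the entries $G_r(x_m,u,\dots ,u)$, $a_m$, $G_r(u,x_{m+1},\dots ,x_{m+1})$ all tend to $0$, while every remaining entry of the maximum equals either $G_r(u,fu,\dots ,fu)$ or $G_r(x_m,fu,\dots ,fu)$. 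Letting $m\to\infty$ and using joint continuity of $G_r$ (established earlier) yields $G_r(u,fu,\dots ,fu)\le k\,G_r(u,fu,\dots ,fu)$, and since $k<1$ this forces $G_r(u,fu,\dots ,fu)=0$, hence $fu=u$ by [G 2] and [G 4].

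For uniqueness, suppose $v=fv\ne u$ and put $\alpha=G_r(u,v,\dots ,v)$, $\beta=G_r(v,u,\dots ,u)$. Substituting $x_1=u$, $x_2=\dots =x_r=v$ makes all diagonal terms vanish and leaves $\alpha\le k\max\{\alpha,\beta\}$; the substitution with $u$ and $v$ interchanged gives $\beta\le k\max\{\alpha,\beta\}$, so $\max\{\alpha,\beta\}\le k\max\{\alpha,\beta\}$, forcing $\alpha=\beta=0$ and contradicting $u\ne v$ via [G 2] and [G 4]. Finally, for generalized $r$‑continuity of $f$ at $u$ it suffices by Proposition~\ref{cont} to prove sequential continuity there: if $z_m\to u$ then $fz_m\to u$. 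I would substitute $x_1=u\ (=fu)$, $x_2=\dots =x_r=z_m$; the left side becomes $d_m:=G_r(u,fz_m,\dots ,fz_m)$, the entries built only from $u$ and $z_m$ tend to $0$ by \eqref{ineq1} and Proposition~\ref{convsq}, and each entry $G_r(z_m,fz_m,\dots ,fz_m)$ is at most $G_r(z_m,u,\dots ,u)+d_m$ by [G 5]. Hence $d_m\le k\max\{A_m,\ g_m+d_m\}$ with $A_m\to 0$ and $g_m:=G_r(z_m,u,\dots ,u)\to 0$; a pointwise case split gives $d_m\le\max\{kA_m,\tfrac{k}{1-k}g_m\}\to 0$ (only $k<1$ is used), and then $G_r(fz_m,u,\dots ,u)\le (r-1)d_m\to 0$ by \eqref{ineq1}, so $fz_m\to u=fu$.

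The only non‑formal part of the argument is the bookkeeping of the maximum on the right‑hand side in each substitution, arranged so that every one of the $2r+1$ entries is either manifestly a null sequence or can be reabsorbed into the left‑hand side; I expect this bookkeeping, rather than any single estimate, to be the main obstacle. The reabsorption in the Cauchy step is precisely what forces $k<1/2$, whereas the fixed‑point, uniqueness and continuity steps only require $k<1$.
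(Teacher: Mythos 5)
Your proposal is correct and follows the paper's skeleton almost exactly: the same Picard iteration with the substitution $x_1=x_{m-1}$, $x_2=\dots=x_r=x_m$, the same reabsorption via [G 5] to get the ratio $k/(1-k)<1$, the same telescoping Cauchy estimate, and essentially the same limit arguments for existence and uniqueness of the fixed point. The one place where you genuinely diverge is the continuity step, and there your route is actually \emph{better} than the paper's. The paper substitutes $x_1=y_n$, $x_2=\dots=x_r=u$, bounds $G_r(u,fy_n,\dots,fy_n)\le (r-1)G_r(fy_n,u,\dots,u)$ via Proposition~\ref{inequalty}, and then divides by $1-(r-1)k$ to obtain $G_r(fy_n,u,\dots,u)\le \frac{k}{1-(r-1)k}G_r(y_n,u,\dots,u)$; this division is only legitimate when $k<1/(r-1)$, which the hypothesis $k<1/2$ guarantees for $r=3$ but not for $r\ge 4$ (e.g.\ $r=4$, $k=0.4$ gives $1-(r-1)k<0$ and the inequality becomes vacuous). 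By instead putting $u$ in the first slot and working with $d_m=G_r(u,fz_m,\dots,fz_m)$, you reabsorb $d_m$ with coefficient $k$ rather than $(r-1)k$, so your case split $d_m\le\max\{kA_m,\tfrac{k}{1-k}g_m\}$ needs only $k<1$ and closes the argument uniformly in $r$. Your uniqueness step ($\max\{\alpha,\beta\}\le k\max\{\alpha,\beta\}$ directly, versus the paper's two-step $\alpha\le k^2\alpha$) is a cosmetic simplification of the same idea. In short: same method throughout, with a repaired continuity step that removes an unstated restriction on $k$ present in the paper's own proof.
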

\begin{proof}
Let $f\colon X\to X$ be a mapping satisfying the given condition.  Let $y_0\in X$ be an arbitrary point. Define a sequence$<y_n>$ by the relation $y_n=f^ny_0$, then by the given condition we have
\begin{equation*}
\begin{split}
G_r(fy_{n-1},fy_n, \dots, fy_n) \le  k&\,\text{max} \{ G_r(y_{n-1},y_n,\dots, y_n),G_r(y_{n-1},fy_{n-1},\dots ,fy_{n-1}),\\
&\dots \dots , G_r(y_n,fy_n,\dots ,fy_n),G_r(y_{n-1},fy_n,\dots ,fy_n),\\
&\qquad G_r(y_n,fy_n,\dots ,fy_n),\dots ,G_r(y_n,fy_{n-1},\dots ,fy_{n-1})\}
\end{split}
\end{equation*}
which gives
\begin{equation}
\label{equ3}
G_r(y_n,y_{n+1},\dots ,y_{n+1})\le k\,\text{max}\{ G_r(y_{n-1},y_n,\dots ,y_n),G_r(y_{n-1},y_{n+1},\dots ,y_{n+1})\}
\end{equation}
By [G 5] we have
\begin{equation*}
G_r(y_{n-1},y_{n+1},\dots ,y_{n+1})\le G_r(y_{n-1},y_n,\dots ,y_n)+G_r(y_n,y_{n+1},\dots ,y_{n+1})
\end{equation*}
Hence from ~\eqref{equ3} we have
\begin{multline*}
G_r(y_n,y_{n+1},\dots ,y_{n+1})\le k\, \text{max} \{G_r(y_{n-1},y_n,\dots ,y_n),G_r(y_{n-1},y_n,\dots ,y_n)+\\
G_r(y_n,y_{n+1},\dots ,y_{n+1})\}
\end{multline*}
Thus 
\begin{equation*}
G_r(y_n,y_{n+1},\dots ,y_{n+1})\le k\,  \{G_r(y_{n-1},y_n,\dots ,y_n)+G_r(y_n,y_{n+1},\dots ,y_{n+1})\}
\end{equation*}
Which gives
\begin{equation}
\label{equ4}
G_r(y_n,y_{n+1},\dots ,y_{n+1})\le  \frac{k}{1-k}\,G_r(y_{n-1},y_n,\dots ,y_n)
\end{equation}
Let $q=\frac{k}{1-k}$ , then $q<1$ since $0\le k<1/2$ and by repeated application of ~\eqref{equ4} we have
\begin{equation}
\label{equ5}
G_r(y_n,y_{n+1},\dots ,y_{n+1})\le  q^n\,G_r(y_0,y_1,\dots ,y_1)
\end{equation}
For all natural numbers $n$ and $m(>n)$ we have by repeated use of [G 5] and ~\eqref{equ5} that
\begin{equation*}
G_r(y_n,y_m,\dots,y_m)\le \frac{q^n}{1-q}\,G_r(y_0,y_1,\dots ,y_1)\rightarrow 0 \, \text{as} \, n,m \rightarrow \infty
\end{equation*}
Thus the sequence $<y_n>$ is a $G_r$-Cauchy sequence in $X$. By completeness of $(X,G_r)$, there exists a point $u\in X$ such that $<y_n>$ is $G_r$-convergent to $u$.\\
Suppose that $fu\neq u$, then
\begin{equation*}
\begin{split}
G_r(y_n,fu, \dots, fu) \le  k&\,\text{max} \{ G_r(y_{n-1},u,\dots, u),G_r(y_{n-1},y_n,\dots ,y_n),\dots  , G_r(u,fu,\dots ,fu),\\
&\qquad \quad G_r(y_{n-1},fu,\dots ,fu), G_r(u,fu,\dots ,fu),\dots ,G_r(u,y_n,\dots ,y_n)\}
\end{split}
\end{equation*}
or
\begin{multline*}
G_r(y_n,fu, \dots, fu) \le  k\,\text{max} \{ G_r(y_{n-1},u,\dots, u),G_r(y_{n-1},y_n,\dots ,y_n), G_r(u,fu,\dots ,fu),\\
G_r(y_{n-1},fu,\dots ,fu),G_r(u,y_n,\dots ,y_n)\}
\end{multline*}
Taking the limit as $n\rightarrow \infty$, and using the fact that the function $G_r$ is continuous on its variables, we have $G_r(u,fu,\dots ,fu) \le k\,G_r(u,fu,\dots ,fu)$,which is a contradiction, since $0\le k<1/2$. So we have $u=fu$.\\
For uniqueness of $u$, suppose that $v\neq u$ is such that $fv=v$, then we have 
\begin{equation*}
G_r(u,v,\dots ,v)=G_r(fu,fv,\dots ,fv)\le k\, \text{max} \{G_r(u,v,\dots,v),G_r(v,fu,\dots ,fu)\}
\end{equation*}
or
\begin{equation*}
G_r(u,v,\dots ,v)\le k\, \text{max} \{G_r(u,v,\dots ,v),G_r(v,u,\dots ,u)\}
\end{equation*}
So, it must be the case that $G_r(u,v,\dots ,v)\le k\,G_r(v,u,\dots ,u)$.\\
Again by the same argument we find that $G_r(v,u,\dots ,u)\le k\, G_r(u,v,\dots ,v)$.Thus we have $G_r(u,v,\dots ,v)\le k^2\,G_r(u,v,\dots ,v)$. Which implies that $u=v$, since $0\le k<1/2$.\\
Now to prove that $f$ is generalized $r$-continuous at $u$, let $<y_n>$ be any sequence in $X$ such that  it is $G_r$-convergent to $u$, then 
\begin{equation*}
\begin{split}
G_r(fy_n,fu, \dots, fu) \le  k&\,\text{max} \{ G_r(y_n,u,\dots, u),G_r(y_n,fy_n,\dots ,fy_n),\dots  , G_r(u,fu,\dots ,fu),\\
& G_r(y_n,fu,\dots ,fu),\dots , G_r(u,fu,\dots ,fu) ,G_r(u,fy_n,\dots ,fy_n)\}
\end{split}
\end{equation*}
or 
\begin{equation*}
\begin{split}
G_r(fy_n,u, \dots, u) \le  k&\,\text{max} \{ G_r(y_n,u,\dots, u),G_r(y_n,fy_n,\dots ,fy_n) , G_r(u,fy_n,\dots ,fy_n)\}
\end{split}
\end{equation*}
By [G 5] we have 
\begin{equation*}
G_r(y_n,fy_n,\dots ,fy_n)\le G_r(y_n,u,\dots ,u)+G_r(u,fy_n,\dots ,fy_n)
\end{equation*}
Thus we deduce that
\begin{equation*}
G_r(fy_n,u, \dots, u) \le  k\, \{ G_r(y_n,u,\dots, u)+ G_r(u,fy_n,\dots ,fy_n)\}
\end{equation*}
Using proposition ~\ref{inequalty} we have
\begin{equation*}
G_r(fy_n,u, \dots, u) \le  k\, \{ G_r(y_n,u,\dots, u)+(r-1)\, G_r(fy_n,u,\dots ,u)\}
\end{equation*}
or
\begin{equation*}
G_r(fy_n,u, \dots, u) \le  \frac{k}{1-(r-1)k}\,  G_r(y_n,u,\dots, u)
\end{equation*}
Taking the limit as $n\rightarrow \infty$, we see that $G_r(fy_n,u,\dots ,u)\rightarrow 0$ and so by proposition ~\ref{convsq} the sequence $<fy_n>$ is $G_r$-convergent to $u=fu$. Therefore proposition ~\ref{cont} implies that $f$ is generalized $r$-continuous at $u$.

\end{proof}
\section{Acknowledgement}
The author is thankful to the referee for his valuable comments and suggestions on this manuscript.


\begin{thebibliography}{999}
\bibitem{ASK} M. Abbas, W. Sintunavarat, P. Kumam, Coupled fixed point of generalized contractive mappings on partially ordered G-metric spaces, \textit{Fixed Point Theory Appl. 2012}, 31 (2012).


\bibitem{DHA}B. C. Dhage,\textit{ A study of some fixed point theorem}, Ph.D. Thesis, Marathwada Univ. Aurangabad, (1984).

\bibitem{GA1} S. G\"{a}hler, 2-metrische r\"{a}ume und ihre topologische struktur, \textit{Math. Nachr.}, 26 (1963), 115-148.

\bibitem{GA2}S. G\"{a}hler, Zur geometric 2-metrische r\"{a}ume, \textit{Rev. Roum. Math. Pures et Appl.}, 11 (1966), 664-669. 

\bibitem{HA1} K. S. Ha,Y. J. Cho and A. White, strictly convex and 2-convex 2-normed spaces, \textit{Math. Japonica}, 33 (1988), No.3, 375-384.

\bibitem{KHA}K. A. Khan, On the possibility of N-topological spaces, \textit{International Journal of Mathematical Archive (IJMA)} 3 (2012), No.7, 2520-2523.

\bibitem{MU1}Z. Mustafa and B. Sims, \textit{Some remarks concerning D-metric spaces}, Proceedings of the International Conferences on Fixed Point Theory and Applications, Valencia (Spain), July (2003), 189-198.

\bibitem{MU2}Z. Mustafa and B. Sims, A new approach to generalized metric spaces, \textit{Journal of Nonlinear and Convex Analysis}, 7 (2006), No.2, 289-297.

\bibitem{SHA} A. K. Sharma, A note on fixed points in 2-metric spaces, \textit{Indian J. Pure Appl. Math.}, 11 (1980), No.2, 1580-1583.

\end{thebibliography}
\end{document}